\theoremstyle{plain}
\newtheorem{thm}{Theorem}[section]
\newtheorem{cor}[thm]{Corollary}
\newtheorem{lemma}[thm]{Lemma}
\theoremstyle{definition}
\newtheorem{rmk}[thm]{Remark}
\numberwithin{equation}{section}
\DeclareMathOperator{\Gal}{Gal}
\DeclareMathOperator{\ord}{ord}
\DeclareMathOperator{\Hom}{Hom}
\DeclareMathOperator{\HH}{H}
\DeclareMathOperator{\rank}{rank}
\DeclareMathOperator{\rnk}{rnk}
\DeclareMathOperator{\Spec}{Spec}
\def\Cal{\mathcal}
\def\Bb{\mathbb}
\def\bP{{\Bb P}}
\def\Z{{\Bb Z}}
\def\G{{\Bb G}}
\def\s{{\sigma}}
\def\e{{\epsilon}}
\def\cO{{\Cal O}}
\def\cC{{\mathscr C}}
\def\cD{{\Cal D}}
\def\cAg{{\Cal A_{g,1}}}
\def\cM{{\Cal M}}
\def\cH{{\Cal H}}
\def\T{{T}}
\def\Proj{{\bP_k^1}}
\def\aa{{\bf n}}
\def\n{{n}}
\def\Om{{\Omega_C^1}}
\def\Diffs{{\HH^0(C, \Om)}}
\def\OmX{{\Omega_X^1}}
\def\DiffsX{{\HH^0(X, \OmX)}}
\def\HdRX{{\HH_{\rm dR}^1(X)}}
\def\Diff#1{{\cD_{#1}}}
\def\set#1{\left\{#1\right\}}
\begin{document}

\title[The rank of the Cartier operator]
{The Rank of the Cartier operator on cyclic covers of the projective line}

\author{Arsen Elkin}

\address{Colorado State University, Fort Collins, Colorado 80523, United States}
\curraddr{Institute of Mathematics, University of Warwick, Coventry CV47AL, United Kingdom}
\email{A.Elkin\char`\@warwick.co.uk}

\subjclass[2000]{Primary: 14H40 Secondary: 11G10}

\keywords{Kummer covers, hyperelliptic curves, Cartier operator, $a$-number}

\date{\today}

\begin{abstract}
We give a bound on the $a$-numbers of the Jacobian varieties 
of Kummer covers of the projective line in terms of their ramification data
and the characteristic of the base field.
\end{abstract}

\thanks{The author was partially supported by the Marie Curie Incoming International Fellowship PIIF-GA-2009-236606}

\maketitle

\section{Introduction}\label{sec:intro}
Let $k$ be an algebraically closed field of characteristic $p>0$,
and, for an integer $g\geq 1$, let $\cAg$ denote the moduli stack
of principally polarized abelian varieties of dimension $g$ defined over $k$.
Space $\cAg$ can be stratified according to the invariants of the
kernel of multiplication by $p$ map
of the abelian varieties, considered as a group scheme, 
such as $p$-rank, Newton polygon, and Ekedahl-Oort type.
These stratifications can then be induced onto any subfamily of abelian varieties.
For example, the points of the moduli space $\cM_g$
of smooth projective curves of genus $g$ defined over $k$
can be considered as a locus in $\cAg$ via the Torelli morphism and
thereby $\cM_g$ can be stratified by invariants of the $p$-torsion
of the Jacobian varieties of such curves.

One of such invariants is the {\em $a$-number} $a_X$ of an abelian variety $X$,
defined as
$$
a_X := \dim_k \Hom(\alpha_p, X[p]),
$$
where $\alpha_p$ is the kernel of the Frobenius endomorphism
on the group scheme $\G_a$
($\alpha_p \cong \Spec(k[x]/x^p)$ as a scheme),
and the group scheme $X[p]$ is the kernel of multiplication by $p$ on $X$.
When $X=J(C)$, the Jacobian variety of a curve $C$, we write $a_C$
instead of $a_{J(C)}$ and refer to it as the $a$-number of $C$.

The $a$-number plays the following role in the Ekedahl-Oort stratification of $\cAg$,
which is known \cite{Oort} to be the stratification by
the isomorphism classes of the $p$-torsion.
For an abelian variety $X$ of dimension $g$ there exists \cite{Oda} an exact sequence
$$
0 \to \DiffsX \to \HdRX \to \DiffsX \to 0,
$$
via which one can consider $\DiffsX$
to be a vector subspace of the de Rham cohomology space $\HdRX$. 
If $V$ is the Verschiebung operator on $\HdRX$,
then $\DiffsX = V\,\HdRX$ 
is the $g$-dimensional space of the canonical filtration \cite{Geer}
of $\HdRX$. Therefore, if $\nu_X = [\nu_{X,1}, \ldots, \nu_{X,g}]$ denotes the 
final type of $X$,
then $\nu_{X,g} := \dim(V\,\DiffsX)$. 
It is known \cite[(5.2.8)]{LO} that $a_X$ equals 
the dimension of the kernel of the action of Verschiebung on $\DiffsX$ 
and therefore $$\nu_{X,g} = g_X - a_X.$$
Of a particular interest are abelian varieties $X$ for which $g_X=a_X$ or, equivalently,
whose Ekedahl-Oort final type is $[0, \ldots, 0]$. These are
precisely \cite[1.6]{LO} the abelian varieties that are {\em isomorphic} to
a self-product of a supersingular elliptic curve, and are known as the
{\em superspecial} abelian varieties.

For an integer $0 \leq a \leq g$, let us write $\T_{g,a}$ for the locus in $\cAg$
of points corresponding to abelian varieties $X$ with $a_X \geq a$
or, equivalently, $\nu_{X,g} \leq g - a$ in the final type of $X$.
For example, $T_{g,g}$ is the locus of superspecial abelian varieties.
A result of Re \cite{Re} states that if
$$
g - a < \frac{2g}{p(p+1)} + \frac{p-1}{p+1},
$$
then $\cM_g \cap \T_{g,a}$ is empty.
Let $\cH_g$ denote the locus in $\cAg$ of points corresponding
to Jacobian varieties of hyperelliptic curves of genus $g$ defined over $k$.
Re also proves that if
$$
g-a \leq \frac{g}{p} - \frac{p+1}{2p},
$$
then $\cH_g \cap \T_{g,a}$ is empty.

In this paper, we reexamine these results for the subloci of $\cAg$ corresponding to
Jacobian varieties of Kummer covers of the projective line $\Proj$
and, in particular, obtain improved bounds for such curves, including the hyperelliptic curves.
We make extensive use of the fact \cite{Oda} that
for a smooth projective curve $C$ the action of Verschiebung on
$\HH^0(J(C), \Omega_{J(C)}^1) \cong \Diffs$
coincides with the action of the Cartier operator \cite{Cartier} on $\Diffs$.

Let $\pi: C \to \Proj$ be a smooth branched cyclic cover of order $\n$ relatively prime to $p$.
Denote by $\alpha_1, \ldots, \alpha_{r} \in \Proj(k)$ the branch points of $\pi$.
Then $C$ is the projectivization of an affine curve given by equation
$$
y^\n = \prod_{\substack{j=1 \\ \alpha_j\neq \infty}}^{r} (x-\alpha_j)^{n_j}
$$
for some integers $0 < n_j < \n$.  If $\alpha_j = \infty$ for some $1\leq j \leq r$,
then there is a unique choice of $0 < n_j < \n$ so that 
$$
\sum_{j=1}^{r} n_j \equiv 0 \pmod \n.
$$
If $\pi$ is not ramified over infinity, this congruence is automatically satisfied.
If $C$ is connected, which we assume to be the case
throughout this paper, then the greatest common divisor $(\n, n_1, \ldots, n_{r})=1$.
The integers $n_1, \ldots, n_{r}$ are well-defined up to a permutation
and simultaneous multiplication by a unit in $\Z/\n\Z$.
The tuple $\aa =(\n; n_1, \ldots, n_{r})$
is referred to  \cite{BouwDeg, BouwThesis} as the {\em (ramification) type} of $\pi$, or of $C$.
The genus $g_C$ of $C$ is given \cite[(4)]{Koo} by the formula
$$
g_C = \frac12 \n (r-1) -  \frac12 \sum_{j=1}^{r} (\n, n_j) + 1.
$$
Given a valid ramification type $\aa$, we write $\cH_{\Proj, \aa}$ for
the locus in $\cAg$ of points corresponding to the Jacobian varieties
of smooth cyclic covers of $\Proj$ of type $\aa$, where $g$ 
is the genus of curves of type $\aa$.
For example, $\cH_{\Proj, \aa}=\cH_g$ for
$
\aa := (2; \underbrace{1, \ldots, 1}_{\text{$2g+2$ times}}).
$
\medskip

Fix a primitive $\n$th root of unity $\zeta \in k$. Then there exists a generator
$\delta$ of $\Gal(C/\Proj)$ acting on the closed points of $C$ by
$(x,y)\mapsto(x,\zeta^{-1} y)$.
By functoriality, $\delta$ induces an automorphism
$\delta^*$ of the vector space $\Diffs$, and
\begin{equation}\label{eqn:directsum}
\Diffs = \bigoplus_{i=0}^{\n-1} \Diff{i},
\end{equation}
where $\Diff{i}$ denotes the $\zeta^i$-eigenspace of $\delta^*$.
The dimensions $d_i := \dim(\cD_i)$ of these spaces are given by
\begin{equation}\label{eqn:multiplicity}
d_i = \sum_{j=1}^{r} \left\langle {i n_j}/{\n} \right\rangle - 1.
\end{equation}
In particular, $d_0=0$.
Here, $\lceil \cdot \rceil$ denotes the ceiling function, 
$\lfloor \cdot \rfloor$ denotes the floor function, 
and $\langle \cdot \rangle$ is the fractional part.
For example, $\langle m/n \rangle n$ is the reduction of $m$ modulo $n$.

Finally, the Cartier operator satisfies
$\cC(\zeta^{pi} \omega) = \zeta^{i} \cC(\omega),$
and therefore
$$
\cC(\cD_{\langle pi/n \rangle n}) \subset \cD_{i}
$$
for all $1 \leq i < \n$. Since $(p, \n)=1$, the function $i \mapsto\langle pi/n \rangle n$
is a permutation on the set $\set{1,\ldots, n-1}$, and we denote its inverse by $\s$.
In other words, $\cC(\cD_i) \subset \cD_{\s(i)}$.

Our main result is
\begin{thm}\label{thm:main}
Let $\pi: C\to \Proj$ be a Kummer cover of type
$(\n; n_1, \ldots, n_{r})$ defined over
an algebraically closed field of characteristic $p>0$.
Define $d_i$ and $\s(i)$ as above. Then
$$
\sum_{i=1}^{\n-1}
\min\left(2 \lfloor {d_i}/{p}\rfloor, 
d_{\s(i)} \right)
\leq g_C - a_C \leq
\sum_{i=1}^{\n-1} \min(d_i, d_{\s(i)}).
$$
\end{thm}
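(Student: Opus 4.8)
The plan is to reinterpret $g_C-a_C$ as the rank of the Cartier operator and to compute that rank one eigenspace at a time. Since the action of Verschiebung on $\Diffs$ agrees with that of $\cC$, and since $a_C=\dim_k\ker(\cC|_{\Diffs})$ while $g_C=\dim_k\Diffs$, we have $g_C-a_C=\rank(\cC)$. The inclusion $\cC(\cD_i)\subset\cD_{\sigma(i)}$ together with the fact that $\sigma$ permutes $\{1,\dots,\n-1\}$ shows that the images of the distinct eigenspaces lie in distinct summands of the decomposition $\Diffs=\bigoplus_i\cD_i$, so $\rank(\cC)=\sum_{i=1}^{\n-1}\rank(\cC|_{\cD_i})$ (recall $d_0=0$). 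It therefore suffices to prove, for each $i$ with $1\le i\le\n-1$, the local bound $\min(2\lfloor d_i/p\rfloor,\,d_{\sigma(i)})\le\rank(\cC|_{\cD_i})\le\min(d_i,\,d_{\sigma(i)})$ and then sum over $i$.

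Next I would make $\cC|_{\cD_i}$ explicit as a matrix. Set $i'=\sigma(i)$; since $pi'\equiv i\pmod\n$ there is an integer $s\ge 0$ with $pi'-i=\n s$, and then $y^{-i}=f(x)^s\,(y^{-i'})^p$, where $f(x)=\prod_j(x-\alpha_j)^{n_j}$. After a M\"obius change of coordinates on $\Proj$ --- which does not alter the isomorphism class of $C$, hence not $a_C$ --- I may assume that neither $0$ nor $\infty$ is a branch point, so that $f$ is monic of degree $R=\sum_j n_j$ with $f(0)\neq 0$. Using the standard basis $\omega_{i,t}=x^t h_i(x)\,y^{-i}\,dx$ of $\cD_i$, with $h_i(x)=\prod_j(x-\alpha_j)^{\lfloor i n_j/\n\rfloor}$ and $0\le t\le d_i-1$, the semilinearity relation $\cC(u^p\eta)=u\,\cC(\eta)$ gives $\cC(\omega_{i,t})=y^{-i'}\,\cC\!\left(x^t h_i(x)f(x)^s\,dx\right)$. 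The Cartier operator on rational differentials in $x$ extracts monomials of degree $\equiv-1\pmod p$: writing $\sum_c e_c x^c$ for $x^t h_i f^s$, one has $\cC(\sum_c e_c x^c\,dx)=\sum_m e_{mp-1}^{1/p}\,x^{m-1}\,dx$. Because $u\mapsto u^{1/p}$ is a field automorphism of $k$ and multiplication by $h_{i'}$ is injective, $\rank(\cC|_{\cD_i})$ equals the ordinary rank of the generalized Hasse--Witt matrix $M$ whose $(m,t)$ entry is the coefficient $(h_i f^s)_{mp-1-t}$ of $x^{mp-1-t}$ in the fixed polynomial $h_i f^s$, with $t$ ranging over $\{0,\dots,d_i-1\}$.

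The upper bound is then immediate: $\cC|_{\cD_i}$ maps a $d_i$-dimensional space to a $d_{\sigma(i)}$-dimensional one, so its rank is at most $\min(d_i,d_{\sigma(i)})$. For the lower bound I would exploit that $h_i f^s$ has nonzero coefficients at both extreme degrees, namely its constant term $h_i(0)f(0)^s$ and its leading coefficient $1$. Put $D=\deg h_i+sR$. Then along the columns $t\equiv-1\pmod p$ (there are $\lfloor d_i/p\rfloor$ of them in $\{0,\dots,d_i-1\}$) the entry $(h_i f^s)_0$ sits at row $m=(t+1)/p$, and the corresponding square submatrix $\big((h_if^s)_{t'-t}\big)$ is lower triangular with nonzero diagonal, hence a rank-$\lfloor d_i/p\rfloor$ minor; symmetrically, the columns $t\equiv-1-D\pmod p$ together with the leading coefficient $1$ give a second triangular minor of rank $\lfloor d_i/p\rfloor$.

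The hard part is to combine these two rank-$\lfloor d_i/p\rfloor$ contributions into rank $2\lfloor d_i/p\rfloor$ (capped at $d_{\sigma(i)}$). When $p\nmid D$ the two families occupy columns in different residue classes modulo $p$, so their pivot columns are automatically disjoint; but since one family's pivots are the rightmost and the other's are the leftmost nonzero entries of their columns, the resulting $2\lfloor d_i/p\rfloor\times 2\lfloor d_i/p\rfloor$ submatrix is \emph{not} triangular, and its nonsingularity does not follow formally from the nonvanishing of the diagonal. The main obstacle is therefore a careful selection of the $2\lfloor d_i/p\rfloor$ rows and columns that renders the submatrix (block-)triangular, or that otherwise forces a nonzero determinant, together with separate treatment of the degenerate congruence $p\mid D$ and of the cap imposed by $d_{\sigma(i)}$. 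Summing the resulting inequalities over $i=1,\dots,\n-1$ then yields the theorem.
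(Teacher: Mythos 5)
Your reduction of $g_C-a_C$ to $\rank(\cC)=\sum_{i=1}^{\n-1}\rank(\cC|_{\cD_i})$, your explicit Hasse--Witt-type matrix for $\cC|_{\cD_i}$, and the upper bound $\rank(\cC|_{\cD_i})\le\min(d_i,d_{\s(i)})$ all match the paper and are fine. But the lower bound --- which is the entire content of the theorem --- is left open: you yourself identify "the main obstacle" of combining the two rank-$\lfloor d_i/p\rfloor$ triangular minors into a nonsingular $2\lfloor d_i/p\rfloor\times 2\lfloor d_i/p\rfloor$ submatrix, and you do not resolve it. This is a genuine gap, not a routine verification. Worse, the strategy as stated is unlikely to close it: you use only the nonvanishing of the two \emph{extreme} coefficients of $h_i f^s$, i.e.\ only two of the $p$ polynomials $f_{i,0},\dots,f_{i,p-1}$ appearing in the $p$-power decomposition $h_if^s\cdot h_{\s(i)}^{-p}=\sum_t f_{i,t}^p x^t$. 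The span of $\{f_{i,t_1}g_1+f_{i,t_2}g_2\}$ has dimension $\min(2m,m+d)$ only if $f_{i,t_1}$ and $f_{i,t_2}$ are coprime; a common factor of just those two (which nonzero extreme coefficients do not rule out) collapses the dimension, so the two-column-family submatrix can genuinely be singular.

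What the paper uses instead is a stronger structural fact: the polynomial $h_{\rnk,i}=f^{\e(i)}h_{\min,i}/h_{\min,\s(i)}^p$ has all zeros of order $<p$ (Lemma \ref{lem:hunder}), hence \emph{all} $p$ digit-polynomials $f_{i,0},\dots,f_{i,p-1}$ together generate the unit ideal of $k[x]$ (Corollary \ref{cor:unitideal}). The image $\cC(\cD_i)$ is then (up to the unit $\omega_{\s(i),1}$) the space $V=\{\sum_t f_{i,t}g_t:\deg g_t<m\}$ with $m=\lfloor d_i/p\rfloor$, and the key Lemma \ref{lem:rank} proves $\dim V\ge\min(2m,m+d)$ by an induction along the flag of gcd ideals $(f_{i,0}),(f_{i,0},f_{i,1}),\dots$, with the degrees of successive gcds telescoping. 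That gcd-flag argument, together with the degree bound $d=d_{\s(i)}-\lfloor d_i/p\rfloor$ from Remark \ref{rmk:choiceofmd}, is the missing ingredient; without something equivalent to it (using all $p$ column families, not just two), your proof does not go through.
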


As a particular case, we almost double Re's bound for hyperelliptic curves,
for which $\cD_1 = \Diffs$, and so $d_1 = g$.
\begin{cor}
If $p>2$ and integers $0 \leq a \leq g$ satisfy
$$
g-a \leq \frac{2g}{p}-2,
$$
then $\cH_g \cap \T_{g,a}$ is empty.
\end{cor}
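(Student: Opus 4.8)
The plan is to specialize Theorem~\ref{thm:main} to the hyperelliptic case and then chase the resulting numerical inequality. For a hyperelliptic curve $C$ of genus $g$ the order is $\n=2$, so the index $i$ in the theorem ranges over the single value $i=1$. Moreover $\s$ is a permutation of the one-element set $\set{1}$, which forces $\s(1)=1$, and by hypothesis $d_1 = \dim \cD_1 = \dim \Diffs = g$. Consequently the lower bound in Theorem~\ref{thm:main} collapses to the single term
$$
g_C - a_C \;\geq\; \min\left(2\lfloor g/p\rfloor,\, d_{\s(1)}\right) \;=\; \min\left(2\lfloor g/p\rfloor,\, g\right).
$$

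Next I would identify which of the two quantities realizes this minimum. Since $p>2$, and $p$ is prime, we have $p\geq 3$, whence $2\lfloor g/p\rfloor \leq 2g/p \leq 2g/3 < g$. Thus the minimum is attained by the Frobenius-type term, and every hyperelliptic curve of genus $g$ satisfies $g_C - a_C \geq 2\lfloor g/p\rfloor$. Feeding in the elementary strict inequality $\lfloor g/p\rfloor > g/p - 1$, together with the standing hypothesis $g-a \leq 2g/p - 2$, gives
$$
g_C - a_C \;\geq\; 2\lfloor g/p\rfloor \;>\; \frac{2g}{p} - 2 \;\geq\; g-a.
$$
Therefore $a_C < a$ for every hyperelliptic curve $C$ of genus $g$, so no point of $\cH_g$ lies in $\T_{g,a}$, and the intersection $\cH_g \cap \T_{g,a}$ is empty.

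The argument is a direct substitution into the main theorem, so I do not anticipate a genuine obstacle. The only two points requiring care are verifying that the minimum is realized by $2\lfloor g/p\rfloor$ rather than by $g$ (this is exactly where the hypothesis $p>2$ enters, since the estimate $2\lfloor g/p\rfloor < g$ breaks down at $p=2$), and ensuring that the middle inequality above is \emph{strict}, so that $g_C-a_C$ is forced strictly above $g-a$ and hence $a_C$ strictly below $a$.
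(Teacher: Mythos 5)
Your argument is correct and is exactly the computation the paper intends: the paper itself only remarks that for hyperelliptic curves $d_1=g$ and $\s(1)=1$, leaving the substitution into Theorem~\ref{thm:main} and the strict inequality $2\lfloor g/p\rfloor > 2g/p-2 \geq g-a$ implicit. Your write-up supplies those details (including why $p>2$ makes the minimum equal to $2\lfloor g/p\rfloor$) in the same way.
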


In addition, we obtain the following result.
\begin{cor}\label{cor:superspecial}
Let $C$ be a superspecial Kummer cover of $\Proj$ of ramification type $(\n; n_1, \ldots, n_{r})$.
Then for each $1\leq i \leq \n-1$, one has $\dim(\cD_i) < p$.
In particular, $g_C \leq (p-1)(\n-1)$.
\end{cor}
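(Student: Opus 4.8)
The plan is to combine the lower bound of Theorem \ref{thm:main} with the defining property of superspeciality, and then to dispose of the one case the lower bound cannot detect by an elementary estimate on fractional parts. First I would note that a superspecial $C$ satisfies $a_C = g_C$, so that $g_C - a_C = 0$. Feeding this into the left-hand inequality of Theorem \ref{thm:main} gives
$$
\sum_{i=1}^{\n-1}\min\left(2\lfloor d_i/p\rfloor,\, d_{\s(i)}\right) \le g_C - a_C = 0 .
$$
Since each summand is a nonnegative integer, every summand must vanish, so for each $i$ with $1 \le i \le \n-1$ either $\lfloor d_i/p\rfloor = 0$, in which case $d_i \le p-1$ immediately, or else $d_{\s(i)} = 0$.

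The hard part is the second case, $d_{\s(i)} = 0$, because here $\cC$ maps $\cD_i$ into the zero space $\cD_{\s(i)}$, the rank bound of Theorem \ref{thm:main} degenerates, and nothing is said about $d_i$. To handle it I would exploit that $d_{\s(i)} = 0$ is extremely restrictive. Writing $j = \s(i)$ and recalling that $\s$ inverts the permutation $i \mapsto \langle pi/\n\rangle\n$, one has $i \equiv pj \pmod{\n}$, hence $i n_l \equiv p j n_l \pmod{\n}$ for every $l$ and therefore
$$
\langle i n_l/\n\rangle = \langle p\,\langle j n_l/\n\rangle\rangle \le p\,\langle j n_l/\n\rangle ,
$$
the inequality being just that the fractional part of a nonnegative real number never exceeds the number. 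Summing over $l$ and applying \eqref{eqn:multiplicity} on both sides yields
$$
d_i + 1 = \sum_{l=1}^{r}\langle i n_l/\n\rangle \le p\sum_{l=1}^{r}\langle j n_l/\n\rangle = p\,(d_{\s(i)} + 1) = p ,
$$
so again $d_i \le p - 1$. In either case $d_i < p$ for all $1 \le i \le \n - 1$, which is the first assertion.

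For the genus bound I would simply sum the eigenspace dimensions. Since $d_0 = 0$ and $\Diffs = \bigoplus_{i=0}^{\n-1}\cD_i$ is $g_C$-dimensional, we have $g_C = \sum_{i=1}^{\n-1} d_i$, and each of the $\n - 1$ terms is at most $p - 1$, giving $g_C \le (p-1)(\n-1)$.

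I expect the only real obstacle to be the degenerate case $d_{\s(i)} = 0$: the main theorem is silent there, so some independent input is needed, and the content of the argument is the realization that this degeneracy forces $\sum_{l}\langle j n_l/\n\rangle = 1$, which the inequality $\langle px\rangle \le p\langle x\rangle$ converts directly into $d_i \le p-1$. Everything else is bookkeeping around Theorem \ref{thm:main}.
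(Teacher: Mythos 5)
Your proposal is correct and uses the same two ingredients as the paper: the lower bound of Theorem \ref{thm:main} together with the fractional-part inequality $\sum_\alpha\langle i n_\alpha/\n\rangle \le p\sum_\alpha\langle \s(i) n_\alpha/\n\rangle$, i.e.\ $d_i \le p\,d_{\s(i)} + p - 1$. The paper just applies that inequality uniformly (deducing $\lfloor d_i/p\rfloor \le d_{\s(i)}$, so that $d_i\ge p$ forces both arguments of the $\min$ to be positive) rather than splitting into the two cases $\lfloor d_i/p\rfloor = 0$ and $d_{\s(i)} = 0$ as you do, but this is purely a difference of organization.
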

For $n<(p+1)/2$ this bound is an improvement over the bound $g \leq p(p-1)/2$
in \cite{Ekedahl}. Even though the latter result is sharp for smooth curves in general,
examples of sharpness come from certain Fermat curves that fall outside the
class of curves we consider.

We also prove
that the $a$-number of a Kummer curve in characteristic $2$
depends only on the ramification type of the curve, and not on the location
of the branch points.
\begin{cor}\label{cor:chartwo}
Let $k$ be an algebraically closed field of characteristic $p=2$,
and $C$ be a Kummer cover of $\Proj$ of ramification type $(\n; n_1, \ldots, n_{r})$
defined over $k$.
Then
$$
 a_C = \sum_{i=1}^{n-1} \min(d_{i}, d_{\s(i)}).
$$
\end{cor}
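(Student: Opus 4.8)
The plan is to evaluate the kernel of the Cartier operator one eigenblock at a time and to show that when $p=2$ each blockwise contribution is rigid, in the sense of being determined by the ramification type alone. Recall from the introduction that $a_C=\dim_k\ker\cC$. Since the decomposition \eqref{eqn:directsum} is $\delta^*$-stable and $\cC(\cD_i)\subseteq\cD_{\s(i)}$ with $\s$ a permutation of $\{1,\dots,\n-1\}$, the images of the distinct summands lie in distinct eigenspaces, so the kernel splits as $\ker\cC=\bigoplus_{i=1}^{\n-1}\ker(\cC|_{\cD_i})$ and hence $a_C=\sum_{i=1}^{\n-1}\dim_k\ker(\cC|_{\cD_i})$. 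Because $\cC|_{\cD_i}$ maps the $d_i$-dimensional space $\cD_i$ into the $d_{\s(i)}$-dimensional space $\cD_{\s(i)}$, Theorem~\ref{thm:main} already controls the blockwise ranks; the whole content of the corollary is the characteristic-$2$ refinement that pins each $\dim_k\ker(\cC|_{\cD_i})$ to the exact value $\min(d_i,d_{\s(i)})$ for every admissible placement of the branch points $\alpha_1,\dots,\alpha_r$.

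To make $\cC|_{\cD_i}$ explicit I would fix the monomial bases underlying \eqref{eqn:multiplicity}: a differential $x^{a}\,dx/y^{i}$ lies in $\cD_i$ for $a$ ranging over an explicit set of $d_i$ integers, the range being cut out by the regularity conditions at the $\alpha_j$ (governed by the ceilings $\lceil i n_j/\n\rceil$). Writing $f(x)=\prod_{j}(x-\alpha_j)^{n_j}$ and using $y^{\n}=f(x)$, one rewrites $x^{a}y^{-i}$ in the form $g^{2}\,x^{a'}y^{-\s(i)}$ times a power of $f$, after which the semilinearity $\cC(g^{2}\eta)=g\,\cC(\eta)$ in characteristic $2$ expresses $\cC(x^{a}\,dx/y^{i})$ as a $k$-linear combination of the basis differentials $x^{b}\,dx/y^{\s(i)}$ of $\cD_{\s(i)}$. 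The resulting matrix $M_i$ of $\cC|_{\cD_i}$ then has entries equal to prescribed coefficients of the single polynomial $f(x)^{t}$ for one explicit exponent $t=t(i,\n)$. The feature to exploit is that in characteristic $2$ the coefficients of $f^{t}$ are constrained by the base-$2$ digits of their indices, via Lucas' congruence applied to the expansion of $f^{t}$; this collapses the dependence on the $\alpha_j$ and forces $M_i$ into a rigid, essentially triangular combinatorial shape whose distinguished entries are units independent of the branch locus.

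The final step, and the main obstacle, is to read off from this shape that $\dim_k\ker(\cC|_{\cD_i})=\min(d_i,d_{\s(i)})$ uniformly in the distinct branch points; summing over $1\le i\le \n-1$ then yields $a_C=\sum_{i=1}^{\n-1}\min(d_i,d_{\s(i)})$ and, in particular, exhibits the $a$-number as a function of the type $(\n;n_1,\dots,n_r)$ alone. The difficulty is genuinely the moduli-independence: for $p>2$ the analogous coefficient matrices vary with the $\alpha_j$ and only the inequalities of Theorem~\ref{thm:main} survive, whereas for $p=2$ the digit conditions remove the dependence on the configuration entirely. I would carry out this step blockwise, by analysing the combinatorial pattern of nonzero entries of $M_i$ dictated by the base-$2$ digits and showing that neither coincidences among the $\lceil i n_j/\n\rceil$ nor special positions of the $\alpha_j$ can perturb the relevant minors, so that the kernel dimension is locked at $\min(d_i,d_{\s(i)})$ across the whole family.
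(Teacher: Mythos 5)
Your blockwise reduction of the $a$-number is sound ($a_C=\dim_k\ker\cC$ and the kernel splits along \eqref{eqn:directsum}), but both the blockwise target you set and the mechanism you propose for reaching it are wrong. First, the target: by rank--nullity for the additive, $p^{-1}$-linear map $\cC|_{\cD_i}\colon\cD_i\to\cD_{\s(i)}$ one has $\dim_k\ker(\cC|_{\cD_i})=d_i-\dim_k\cC(\cD_i)$, and what the paper's proof actually establishes is $\dim_k\cC(\cD_i)=\min(d_i,d_{\s(i)})$ for every $i$; hence each block kernel equals $d_i-\min(d_i,d_{\s(i)})=\max(0,\,d_i-d_{\s(i)})$, not $\min(d_i,d_{\s(i)})$. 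Your claim even contradicts Theorem~\ref{thm:main}, which you cite: if, say, $d_i=d_{\s(i)}=d>0$ with $d$ even, the lower bound there forces $\dim\cC(\cD_i)\geq\min\bigl(2(d/2),d\bigr)=d$, so the block kernel is $0$, whereas you would pin it at $d$. (The displayed formula in the corollary as printed, $a_C=\sum\min(d_i,d_{\s(i)})$, is evidently a slip for $g_C-a_C=\sum\min(d_i,d_{\s(i)})$: the paper's own proof concludes with $\dim\cC(\cD_i)=\min(d_i,d_{\s(i)})$ and invokes $a_C=g_C-\rank(\cC)$. By taking the printed form at face value and forcing it blockwise through kernels, you have committed yourself to a false statement.)

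Second, the mechanism. In characteristic $2$ one has $\e(i)\in\{0,1\}$, so the relevant polynomial is $h_{\rnk,i}=f^{\e(i)}h_{\min,i}/h_{\min,\s(i)}^2$, and the matrix entries of $\cC|_{\cD_i}$ in the monomial bases are coefficients of $h_{\rnk,i}$ --- genuinely nonconstant symmetric functions of the branch points (already for $f=(x-\alpha_1)(x-\alpha_2)$ the middle coefficient is $\alpha_1+\alpha_2$). Lucas' congruence governs binomial coefficients, i.e.\ expansions of $(x+c)^t$, and yields no ``essentially triangular shape with unit entries'' here; what is independent of the configuration is only the \emph{rank}, and no rigidity of the entries is available or needed. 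The paper's actual characteristic-$2$ input is a syzygy count: decomposition \eqref{eqn:decomposition} has only the two pieces $f_{i,0},f_{i,1}$, these are coprime by Corollary~\ref{cor:unitideal}, and for two coprime polynomials Remark~\ref{rmk:twopoly} upgrades the inequality of Lemma~\ref{lem:rank} to the exact equality $\dim V=\min(2m,m+d)$, because every syzygy $f_{i,0}g_0=f_{i,1}g_1$ is a polynomial multiple of the Koszul relation. Combined with the degree formula of Lemma~\ref{lem:hunder}, this settles each block: when $d_i$ is even the two bounds of Corollary~\ref{cor:individualrank} already coincide, and when $d_i$ is odd one takes $m=(d_i+1)/2$ and observes $\dim\cC(\cD_i)\geq\dim V-1=\min(d_i,d_{\s(i)})$, since the image misses at most one generator of $V$. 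Your proposal never engages the parity of $d_i$, which is exactly where the difficulty sits, and the Lucas route cannot substitute for the syzygy argument.
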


This article is structured as follows. 
Section \ref{sec:regular} contains a review of the structure of the space 
$\Diffs$ of regular differential forms on $C$.
In Section \ref{sec:cartier}, we compute the action of the Cartier operator
operator on $\Diffs$. Finally, Section \ref{sec:rank} contain the proof
of Theorem \ref{thm:main} and Corollary \ref{cor:superspecial}

The author would like to thank Dr. Rachel Pries for her guidance and insightful
discussions on the subject of this work, and the referee for helpful suggestions
and further inspiration.

\section{Regular differential forms}\label{sec:regular}
First, we examine the action of the group $\mu_\n(k)$ of $n$th roots of unity in $k$ on $\Diffs$. 
The entire contents of this section can be found in
\cite{BouwThesis} and \cite{Koo}, but  for clarity we restate them here without proofs.

Let $\pi: C \to \Proj$ be a connected branched cyclic cover of order $\n$ 
(not necessarily prime) that is relatively prime to the characteristic $p$
of an algebraically closed field of definition $k$ of $C$.
Let $B\subset \Proj(k)$ be the branching locus of the map $\pi$.
It is a well-known fact of the Kummer theory that $C$ has an affine model given
by an equation $y^\n = f(x)$, where $f\in k(x)$. After a change of variables,
we may assume, without loss of generality, that $f\in k[x]$ and
\begin{equation*}
f(x) = \prod_{\alpha\in B-\set{\infty}} (x-\alpha)^{n_\alpha}
\end{equation*}
for some integers $1 \leq n_\alpha < \n$. 
If $\pi$ is not ramified over $\infty \in \Proj(k)$, then
\begin{equation}\label{eqn:typecong}
\sum_{\alpha\in B} n_\alpha \equiv 0 \pmod{\n}.
\end{equation}
The converse is also true.
Therefore, if $\infty \in B$, we can find a unique integer $1 \leq n_\infty < \n$
such that congruence \eqref{eqn:typecong} is satisfied.
The tuple $(\n; n_\alpha \mid \alpha\in B)$ is the ramification type of $C$.
For convenience, we set $n_\alpha = 0$ for $\alpha\notin B$ and denote $\deg(f) = \sum_{\alpha\in B-\set{\infty}} n_\alpha$ by $N$.
Note that from congruence \eqref{eqn:typecong} it follows that $(\n, n_\infty) = (\n, N)$.

By examining the divisor of
${\prod_{\alpha\in k}(x-\alpha)^{j_\alpha} dx}/{y^i}$
for $0\leq i < \n$ and a choice of $j_\alpha\in \Z$ for $\alpha\in k$ with
only finitely many $j_\alpha \neq 0$ it can be seen that this differential form
lies in $\Diffs$ if and only if $j_\alpha$ satisfy
\begin{eqnarray}
\label{eqn:jmin}
(j_\alpha + 1) \n &>& i n_\alpha
\quad\textrm{for all $\alpha\in k$, and}\\
\label{eqn:jmax}
\left( \sum_{\alpha\in k} j_\alpha + 1 \right) \n &<& i N.
\end{eqnarray}
If we define
$$
j_{\min, \alpha, i} := \left\lfloor{i n_\alpha}/{\n} \right\rfloor,
$$
then condition \eqref{eqn:jmin} becomes $j \geq j_{\min, \alpha, i}$.
Note that $j_{\min, \alpha, i} \geq 0$, and that $j_{\min, \alpha, i}=0$ whenever
$n_\alpha = 0$ (i.e., $\alpha\notin B$).
Therefore it is possible to define polynomials
$$
h_{\min, i}(x) := \prod_{\alpha\in k} (x-\alpha)^{j_{\min,\alpha, i}}
$$
for each $0 \leq i < \n$. We have $\deg(h_{\min,i}) = \sum_{\alpha\in k} j_{\min, \alpha, i}$.
Condition \eqref{eqn:jmax}  can be rewritten as $\sum_{\alpha\in k} j_\alpha \leq j_{\max, i}$,
where
$$
j_{\max,i} :=
\left\lceil {i N}/{\n} \right\rceil - 2.
$$
We conclude that
${h(x) h_{\min, i}(x) dx}/{y^i} \in \Diffs$
for some $h\in k(x)$ if and only if $h\in k[x]$ and
$\deg(h) \leq j_{\max, i} - \deg(h_{\min,i}).$
For example, for $i=0$ this condition becomes $\deg(h) \leq -1$.
We set
$$
\omega_{j,1} :={h_{\min, i}(x) dx}/{y^i}.
$$
If we define subspaces $\cD_i \subset \Diffs$ by
$$
\cD_i := \left\{ h(x)\, \omega_{j,1}
\mid  h\in k[x],\ \deg(h) \leq j_{\max, i} - \deg(h_{\min,i}) \right\}
$$
then
\begin{eqnarray*}
\dim(\cD_i) &=&  j_{\max, i} - \deg(h_{\min,i}) + 1 \\
&=& \left\lceil{i N}/{\n} \right\rceil 
- \sum_{\alpha\in B-\set{\infty}} \left\lfloor {i n_\alpha}/{\n} \right\rfloor - 1\\
&=& \sum_{\alpha\in B} {i n_\alpha}/{\n}
- \sum_{\alpha\in B} \left\lfloor {i n_\alpha}/{\n} \right\rfloor - 1\\
&=&  \sum_{\alpha\in B} \left\langle {i n_\alpha}/{\n} \right\rangle - 1.
\end{eqnarray*}

By a counting argument, it can be shown that
$$
\Diffs = \bigoplus_{i=1}^{\n-1} \cD_i.
$$
and so $\cD_i$ are precisely the eigenspaces defined in Section \ref{sec:intro},
with $\cD_0 = \set{0}$.
Differential forms
\begin{equation}\label{eqn:basiselts}
\omega_{i, j} := x^{j-1} \omega_{j,1}
\end{equation}
with $1\leq j \leq d_i := \dim(\cD_i)$ form a basis of $\cD_i$.

\section{Cartier Operator}\label{sec:cartier}
In \cite{Cartier}, Cartier defines an operator
$\cC$ on the sheaf $\Om$ satisfying the following properties
\begin{enumerate}
\item $\cC(\omega_1 + \omega_2) = \cC(\omega_1) + \cC(\omega_2)$,
\item $\cC(h^p \omega) = h \cC(\omega)$,
\item $\cC(dz) = 0$,
\item $\cC(dz/z) = dz/z$,
\end{enumerate}
for all local sections $\omega$, $\omega_1$, $\omega_2$ of $\Om$
and $z$ of $\cO_C$.
The first two properties are collectively referred to as
$p^{-1}$-linearity. 
It follows that for a local section
$$
\omega = (h_0^p + h_1^p z + \ldots + h_{p-1}^p z^{p-1}) dz/z
$$
of $\Om$ with nonzero $z$, we have
\begin{equation*}
\cC(\omega) = h_0 dz/z.
\end{equation*}
This operator induces a $p^{-1}$-linear map
$$
\cC: \Diffs \to \Diffs.
$$
If one defines $\rank(\cC) := \dim_k \cC(\Diffs)$, then
$$
a_C = g_C - \rank(\cC).
$$
\smallskip

Define integer-valued functions $\e$ and $\s$ on $\set{1,\ldots,\n-1}$ as follows.
\begin{enumerate}
\item $1\leq \s(i) \leq \n-1$ is the unique integer satisfying $p\s(i) \equiv i \pmod \n.$
\item $0\leq \e(i) \leq p-1$ is  the unique integer satisfying $\n \e(i) \equiv -i \pmod p.$
\end{enumerate}
Then 
\begin{equation}\label{eqn:sigeps}
p\s(i) - \n\e(i) = i.
\end{equation}
Indeed, by the Chinese Remainder Theorem, $p\s(i) - \n\e(i) \equiv i \pmod{p\n}$,
and equality \eqref{eqn:sigeps} follows from the bounds on $\s(i)$ and $\e(i)$.

\begin{rmk}\label{rmk:infinity}
By composing $\pi: C\to \Proj$ with a suitable automorphism of $\Proj$
we can obtain a covering of $\Proj$ isomorphic to $C$ that is
not ramified over $\infty\in \Proj(k)$.
Since the $a$-number is an isomorphism invariant, we may assume, 
without loss of generality, that $\pi$ is not branched over $\infty$,
and therefore $n$ divides $N$.
Further, we make that assumption.
\end{rmk}

We will require the following result.
\begin{lemma}\label{lem:hunder}
For all $0<i<\n$, rational function 
$$
h_{\rnk, i} := f^{\e(i)} h_{\min,i}/ h_{\min, \s(i)}^p \in k(x)
$$
is a polynomial with
\begin{equation*}
\deg(h_{\rnk, i}) = p d_{\s(i)} - d_i + p - 1
\end{equation*}
all of whose zeros have orders less than $p$.
\end{lemma}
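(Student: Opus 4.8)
The plan is to compute the order of $h_{\rnk,i}$ at each finite point $\alpha\in k$ directly from the definitions and to show that every such order is an integer lying in the range $[0,p-1]$; the degree is then the sum of these orders over the branch points. By Remark \ref{rmk:infinity} I may assume every branch point is finite, so $f$, $h_{\min,i}$, and $h_{\min,\s(i)}$ are all polynomials in $x$ vanishing only at the branch points. Reading off the orders of the three factors gives
$$
\ord_\alpha(h_{\rnk,i}) = \e(i)\,n_\alpha + \lfloor i n_\alpha/\n\rfloor - p\lfloor \s(i) n_\alpha/\n\rfloor,
$$
where $n_\alpha$ is the local ramification exponent (with $n_\alpha=0$ off the branch locus, so the order is then $0$).

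The key step will be to collapse this expression using the relation \eqref{eqn:sigeps}, namely $p\s(i)-\n\e(i)=i$. Writing each floor as $\lfloor t\rfloor = t - \langle t\rangle$, the terms linear in $n_\alpha$ assemble into $n_\alpha\bigl(\e(i)+i/\n-p\s(i)/\n\bigr)$, and this coefficient vanishes precisely by \eqref{eqn:sigeps}. What survives is
$$
\ord_\alpha(h_{\rnk,i}) = p\langle \s(i) n_\alpha/\n\rangle - \langle i n_\alpha/\n\rangle.
$$
Since $p\s(i)\equiv i\pmod\n$ we have $\langle i n_\alpha/\n\rangle = \langle p\s(i) n_\alpha/\n\rangle$, so letting $s_\alpha$ be the reduction of $\s(i) n_\alpha$ modulo $\n$ the right-hand side equals $p s_\alpha/\n - \langle p s_\alpha/\n\rangle = \lfloor p s_\alpha/\n\rfloor$. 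This is a nonnegative integer, which proves $h_{\rnk,i}\in k[x]$; and because $0\le s_\alpha\le \n-1$ one gets $\lfloor p s_\alpha/\n\rfloor \le p-1$, so every zero of $h_{\rnk,i}$ has order strictly less than $p$.

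For the degree I would sum these local orders over $\alpha\in B$. Using the identity just obtained, $\lfloor p\langle \s(i) n_\alpha/\n\rangle\rfloor = p\langle \s(i) n_\alpha/\n\rangle - \langle i n_\alpha/\n\rangle$, together with the dimension formula \eqref{eqn:multiplicity} in the form $\sum_{\alpha\in B}\langle i n_\alpha/\n\rangle = d_i+1$, gives
$$
\deg(h_{\rnk,i}) = p\sum_{\alpha\in B}\langle \s(i) n_\alpha/\n\rangle - \sum_{\alpha\in B}\langle i n_\alpha/\n\rangle = p(d_{\s(i)}+1) - (d_i+1) = p d_{\s(i)} - d_i + p - 1,
$$
as claimed. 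The only genuinely delicate point is the cancellation forced by \eqref{eqn:sigeps}: once the coefficient of $n_\alpha$ is gone, the remaining quantity is visibly the floor of something in $[0,p)$, and the degree statement is pure bookkeeping with the dimension formula. I would be careful to confirm that nothing is contributed at $\infty$, which is exactly what Remark \ref{rmk:infinity} is set up to guarantee.
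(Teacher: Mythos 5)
Your proof is correct and follows essentially the same route as the paper: both compute $\ord_\alpha(h_{\rnk,i})=\e(i)n_\alpha+j_{\min,\alpha,i}-p\,j_{\min,\alpha,\s(i)}$ and use \eqref{eqn:sigeps} to reduce it to $\lfloor p s_\alpha/\n\rfloor$ with $s_\alpha$ the residue of $\s(i)n_\alpha$ modulo $\n$, which gives both polynomiality and the bound by $p-1$. The only (cosmetic) difference is in the degree count: you sum the local orders over the branch points and invoke \eqref{eqn:multiplicity}, whereas the paper computes $\deg(h_{\rnk,i})=N\e(i)+\deg(h_{\min,i})-p\deg(h_{\min,\s(i)})$ directly via the $j_{\max}$ formulas and the divisibility $\n\mid N$; both are valid one-line bookkeeping steps.
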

\begin{proof}
Let $\alpha \in k$. Then
\begin{eqnarray*}
\ord_\alpha(h_{\rnk,i})
&=& 
\e(i) \ord_\alpha(f) + \ord_\alpha(h_{\min,i}) - p \ord_\alpha(h_{\min, \s(i)})\\
&=& \e(i) n_\alpha + j_{\min,\alpha,i} - p  j_{\min,\alpha,\s(i)}\\
&=& \e(i) n_\alpha + j_{\min,\alpha,p\s(i) - n\e(i)} - p  j_{\min,\alpha,\s(i)}\\
&=& \e(i) n_\alpha + \left\lfloor {(p\s(i) - n\e(i)) n_\alpha}/{\n} \right\rfloor
- p \left\lfloor {\s(i) n_\alpha}/{\n} \right\rfloor\\
&=& \left\lfloor {p\s(i) n_\alpha}/{\n} \right\rfloor
- p \left\lfloor {\s(i) n_\alpha}/{\n} \right\rfloor.
\end{eqnarray*}
Obviously, $0 \leq \ord_\alpha(h_{\rnk, i}) \leq p - 1,$ as claimed. 
Notice that if $n_\alpha = 0$, then
$\ord_\alpha(h_{\rnk, i})=0$. In other words,
polynomial $h_{\rnk, i}$ has a zero at $\alpha\in k$
only if $\alpha\in B-\set{\infty}$.

Finally, we know that $d_i = j_{\max, i} - \deg(h_{\min, i}) + 1,$ and therefore,
\begin{eqnarray*}
\deg(h_{\rnk, i})
&=& N\e(i) +  j_{\max, i} - d_i + 1 \\
&& - p (j_{\max, \s(i)} - d_{\s(i)} + 1)\\
&=& p d_{\s(i)} - d_{i}\\
&&  + N\e(i) + \left(\left\lceil {N (p\s(i)-n\e(i))}/{\n} \right\rceil - 1 \right)
 - p \left( \left\lceil {N \s(i)}/{\n} \right\rceil - 1\right)\\
&=& p d_{\s(i)} - d_i + p - 1.
\end{eqnarray*}
\end{proof}

\begin{cor}\label{cor:unitideal}
For $0<i<\n$, define polynomials $f_{i,0}, \ldots, f_{i, p-1} \in k[x]$ by decomposition
\begin{equation}\label{eqn:decomposition}
h_{\rnk, i}(x) = \sum_{t=0}^{p-1} f_{i, t}(x)^p x^t,
\end{equation}
where $h_{\rnk, i}\in k[x]$ is defined in Lemma \ref{lem:hunder}.
Then ideal $(f_{i,0}, \ldots, f_{i, p-1})$ is the unit ideal of $k[x]$.
\end{cor}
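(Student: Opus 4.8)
The plan is to exploit that $k[x]$ is a principal ideal domain. Since $k$ is algebraically closed, the ideal $(f_{i,0}, \ldots, f_{i,p-1})$ is generated by $g := \gcd(f_{i,0}, \ldots, f_{i,p-1})$, and it is the unit ideal of $k[x]$ precisely when $g$ is constant, i.e. precisely when the polynomials $f_{i,0}, \ldots, f_{i,p-1}$ have no common zero in $k$. So the whole corollary reduces to ruling out such a common zero.

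First I would argue by contradiction: suppose $\alpha \in k$ is a common zero of all the $f_{i,t}$. Then $(x-\alpha) \mid f_{i,t}$ for each $0 \leq t \leq p-1$, so $(x-\alpha)^p \mid f_{i,t}(x)^p$, and hence $(x-\alpha)^p$ divides every summand $f_{i,t}(x)^p x^t$ appearing in the decomposition \eqref{eqn:decomposition}. Summing over $t$ gives $(x-\alpha)^p \mid h_{\rnk,i}$, so that $\ord_\alpha(h_{\rnk,i}) \geq p$.

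This directly contradicts Lemma \ref{lem:hunder}, which states that every zero of $h_{\rnk,i}$ has order strictly less than $p$. Therefore no such common zero $\alpha$ exists, $g$ is constant, and $(f_{i,0}, \ldots, f_{i, p-1})$ is the unit ideal of $k[x]$.

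The only real content of the argument is the implication ``common zero $\Rightarrow$ order at least $p$'', which hinges entirely on the fact that in \eqref{eqn:decomposition} each $f_{i,t}$ is raised to the $p$-th power; I do not expect any serious obstacle beyond writing this divisibility out carefully. A minor preliminary point, which I would note but not dwell on, is that the decomposition \eqref{eqn:decomposition} exists and is unique: one groups the monomials of $h_{\rnk,i}$ according to the residue of their exponent modulo $p$ and extracts $p$-th roots of the resulting coefficients, which exist and are unique because $k$ is perfect.
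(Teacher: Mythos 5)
Your argument is exactly the paper's proof: a common root of $f_{i,0},\ldots,f_{i,p-1}$ would be a root of $h_{\rnk,i}$ of order at least $p$, contradicting Lemma \ref{lem:hunder}. The extra remark on existence and uniqueness of the decomposition \eqref{eqn:decomposition} is correct and harmless, so the proposal is fine as written.
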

\begin{proof}
Indeed, a common root of $f_{i,0}, \ldots, f_{i, p-1}$ would be a root
of $h_{\rnk, i}(x)$ of order no less than $p$, contradicting Lemma \ref{lem:hunder}.
\end{proof}

\begin{thm}\label{thm:cartier}
Let $k$ be an algebraically closed field of characteristic $p>0$ and
$$
C: y^\n = f(x)
$$
be a Kummer cover of $\Proj$ defined $k$ 
with the roots of polynomial $f\in k[x]$ having order less than $\n$.
Let $\omega_{i,j}$ be given by \eqref{eqn:basiselts}, and polynomials 
$f_{i,0}, \ldots, f_{i, p-1}$ be defined by decomposition \eqref{eqn:decomposition}.
Then
$$
\cC\left(\omega_{i,j} \right) = x^{\lfloor j/p \rfloor} f_{i,  \langle -j/p \rangle p}(x)\, \omega_{\s(i), 1}.
$$
\end{thm}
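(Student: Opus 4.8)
The plan is to run the basis element through the two defining properties of $\cC$ after a single algebraic reduction that replaces the $(-i)$-th power of $y$ by a perfect $p$-th power. Start from the explicit basis differential \eqref{eqn:basiselts}, $\omega_{i,j} = x^{j-1} h_{\min,i}(x)\,dx/y^{i}$. The key input is the identity \eqref{eqn:sigeps}, $p\s(i) - \n\e(i) = i$, which on $C$ reads $y^{p\s(i)} = y^{i}\,y^{\n\e(i)} = y^{i} f(x)^{\e(i)}$ because $y^{\n}=f(x)$. Hence $y^{-i} = f(x)^{\e(i)}\,(y^{\s(i)})^{-p}$, and substituting this into $\omega_{i,j}$ trades the exponent $-i$ for $-p\s(i)$ at the cost of an extra factor $f^{\e(i)}$.

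Next I would insert the factorization from Lemma~\ref{lem:hunder}. Since $f^{\e(i)} h_{\min,i} = h_{\rnk,i}\,h_{\min,\s(i)}^{p}$ by the definition of $h_{\rnk,i}$, the above becomes
\[
\omega_{i,j} = x^{j-1} h_{\rnk,i}(x)\left(\frac{h_{\min,\s(i)}(x)}{y^{\s(i)}}\right)^{p} dx.
\]
The bracketed factor is a genuine $p$-th power, so property (2) ($p^{-1}$-linearity) pulls it out:
\[
\cC(\omega_{i,j}) = \frac{h_{\min,\s(i)}(x)}{y^{\s(i)}}\,\cC\!\left(x^{j-1} h_{\rnk,i}(x)\,dx\right).
\]
This already exhibits the factor $h_{\min,\s(i)}(x)\,dx/y^{\s(i)} = \omega_{\s(i),1}$, reducing the problem to evaluating the scalar differential $\cC(x^{j-1} h_{\rnk,i}\,dx)$ and reading off the polynomial that multiplies $\omega_{\s(i),1}$.

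For that last evaluation I would substitute the base-$p$ decomposition~\eqref{eqn:decomposition}, $h_{\rnk,i} = \sum_{t=0}^{p-1} f_{i,t}^{p}\,x^{t}$, and again use additivity and property (2):
\[
\cC\!\left(x^{j-1} h_{\rnk,i}\,dx\right) = \sum_{t=0}^{p-1} f_{i,t}(x)\,\cC\!\left(x^{t+j-1}\,dx\right).
\]
Everything now reduces to the elementary monomial identity $\cC(x^{m}\,dx) = x^{(m+1)/p-1}\,dx$ when $p \mid (m+1)$ and $\cC(x^{m}\,dx)=0$ otherwise; this follows from properties (2)--(4) by writing $x^{m}\,dx = \bigl(x^{\lfloor (m+1)/p\rfloor}\bigr)^{p}\,x^{(m+1)\bmod p}\,dx/x$ and using $\cC(dx/x)=dx/x$. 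Among $0\le t\le p-1$ there is exactly one index with $p \mid (t+j)$, namely $t = \langle -j/p\rangle p$; only that term survives, and the monomial identity supplies simultaneously the coefficient $f_{i,\langle -j/p\rangle p}(x)$ and the accompanying power of $x$. Collecting the factors then yields the stated formula.

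The main obstacle here is not conceptual but combinatorial bookkeeping. The passage from $y^{-i}$ to $(y^{-\s(i)})^{p}$ via \eqref{eqn:sigeps} and the recognition of the combination $h_{\rnk,i}$ are forced once one wants to create a $p$-th power, so no estimate is involved. The delicate point is the monomial step: isolating the unique surviving $t$ and tracking the resulting exponent of $x$ carefully, together with keeping the conventions for the fractional part $\langle\cdot\rangle$ and the floor function consistent so that the index of $f_{i,\cdot}$ and the power of $x$ match the statement exactly.
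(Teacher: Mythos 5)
Your proposal is essentially identical to the paper's proof: both use \eqref{eqn:sigeps} to replace $y^{-i}$ by $f^{\e(i)}y^{-p\s(i)}$, pull out the $p$-th power $\bigl(h_{\min,\s(i)}/y^{\s(i)}\bigr)^p$ via the definition of $h_{\rnk,i}$ from Lemma~\ref{lem:hunder}, and finish with the base-$p$ decomposition \eqref{eqn:decomposition} together with the monomial action of $\cC$. One bookkeeping caution on exactly the point you flagged: carrying out your monomial identity with $m=t+j-1$ and $t=\langle -j/p\rangle p$ gives the exponent $(t+j)/p-1=\lfloor (j-1)/p\rfloor$ for the power of $x$, which agrees with the stated $\lfloor j/p\rfloor$ except when $p\mid j$ (where it is one less) --- the same off-by-one that is latent in the paper's own final displayed line --- so ``collecting the factors'' does not quite yield the formula as printed in that case.
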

\begin{proof}(after \cite{YuiJV})
By equality \eqref{eqn:sigeps}, $y^{p\s(i)-i} = y^{\n\e(i)} = f(x)^{\e(i)}$.
Therefore,
\begin{eqnarray*}
\cC\left(\omega_{i,j} \right)
&=&
\cC\left({x^{j-1} h_{\min, i}(x) dx}/{y^i} \right)\\
&=&
\cC\left( {x^{j-1} h_{\min, i}(x) f^{\e(i)}(x) dx}/{y^{p \s(i)}} \right)\\
&=&  
\frac{x^{\lfloor j/p \rfloor} h_{\min,\s(i)}(x)}{y^{\s(i)}}
\cC\Bigl(x^{\langle j/p \rangle p} h_{\rnk, i}(x)\, \frac{dx}{x}\Bigr)\\
&=& 
x^{\lfloor j/p \rfloor} f_{i,  \langle -j/p \rangle p}(x)\, \omega_{\s(i), 1}.
\end{eqnarray*}
\end{proof}

\begin{rmk}\label{rmk:choiceofmd}
Since $j\leq d_i$, we have $\lfloor j/p \rfloor \leq \lfloor d_i/p \rfloor$.
Also note that
\begin{eqnarray*}
\deg(f_{i,j}) &\leq &
\deg(f_{i, \langle (-d_i-1)/p\rangle p})\\
&=&
\lfloor \deg(h_{\rnk, i})/p\rfloor\\
 &=& d_{\s(i)} - \lfloor d_i / p\rfloor,
\end{eqnarray*}
for all $0 \leq j \leq p-1$,
since the leading term of $f_{i, \langle (-d_i-1)/p\rangle p}$
comes from the leading term of $h_{\rnk, i}$.
\end{rmk}

\section{Rank of the Cartier operator}\label{sec:rank}

In proving Theorem \ref{thm:main} we will require the following technical result.
\begin{lemma}\label{lem:rank}
Let $f_1, \ldots, f_r \in k[x]$ be polynomials such that $(f_1, \ldots, f_{r})$
is the unit ideal of $k[x]$, and let $m\geq 0$ be an integer. 
Put $d:=\max_{1\leq i \leq r}(\deg(f_i))$.
If we define a vector subspace $V$ of $k[x]$ by
$$
V := \set{f_1 g_1 + \ldots + f_{r} g_r \mid g_1, \ldots, g_r \in k[x],\ \deg(g_j) < m\text{ for all $1\leq j \leq r$}},
$$
then
$$
\dim(V) \geq \min(2m, m+d).
$$
\end{lemma}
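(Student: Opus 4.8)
The plan is to reduce the problem to the case $r=2$ and then compute $\dim V$ exactly via rank–nullity. First I would dispose of the trivial case $d=0$: here every $f_i$ is a constant, and since they generate the unit ideal some $f_{i_0}$ is a nonzero scalar, whence $V \supseteq f_{i_0}\cdot k[x]_{<m} = k[x]_{<m}$ (the space of polynomials of degree less than $m$) and $\dim V \geq m = \min(2m,\,m)$. So I may assume $d \geq 1$ and, after reordering, that $\deg(f_1)=d$.

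Next I would replace the tuple $f_2,\ldots,f_r$ by a single polynomial. Put $\tilde f := \sum_{j=2}^{r} c_j f_j$ for scalars $c_j\in k$. For any choice of $c_j$, every element $f_1 a + \tilde f b$ with $\deg a,\deg b<m$ lies in $V$ (take $g_1=a$ and $g_j=c_j b$ for $j\geq 2$), so the subspace $V' := \{\,f_1 a + \tilde f b : \deg a,\deg b < m\,\}$ satisfies $V'\subseteq V$; moreover $\deg\tilde f\leq d$. The one nonroutine point is to choose the $c_j$ so that $\gcd(f_1,\tilde f)=1$: at each of the finitely many roots $\alpha$ of $f_1$, the hypothesis $(f_1,\ldots,f_r)=(1)$ forces $f_j(\alpha)\neq 0$ for some $j\geq 2$, so $c\mapsto \tilde f(\alpha)$ is a nonzero linear form in $c=(c_2,\ldots,c_r)$; avoiding the corresponding finite union of hyperplanes (possible since $k$ is infinite) yields $\tilde f(\alpha)\neq 0$ at every root of $f_1$, i.e. $\gcd(f_1,\tilde f)=1$. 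Thus it suffices to bound $\dim V'$ from below, with $f:=f_1$ and $g:=\tilde f$ coprime and $\deg f = d \geq \deg g$.

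For this two-polynomial case I would consider the linear map $\Phi : k[x]_{<m}\oplus k[x]_{<m}\to k[x]$ sending $(a,b)\mapsto fa+gb$, so that $V'=\Phi\bigl(k[x]_{<m}^{2}\bigr)$ and the domain has dimension $2m$. To compute $\ker\Phi$, suppose $fa+gb=0$; then $f\mid gb$, and coprimality gives $f\mid b$, say $b=fc$, whence $a=-gc$. The degree constraints $\deg b = d+\deg c<m$ and $\deg a=\deg g+\deg c<m$ reduce, since $\deg g\leq d$, to the single condition $\deg c<m-d$, so $\ker\Phi\cong\{\,c\in k[x]:\deg c<m-d\,\}$ has dimension $\max(0,\,m-d)$. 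Rank–nullity then gives
$$
\dim V' = 2m - \max(0,\,m-d) = \min(2m,\,m+d),
$$
and since $V'\subseteq V$ this yields $\dim V\geq \min(2m,\,m+d)$ (in fact with equality for $V'$).

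I expect the only real obstacle to be the genericity argument in the reduction step—arranging that a scalar combination $\tilde f$ is coprime to $f_1$ while keeping $\deg\tilde f\leq d$, so that the pair bound $\min(2m,\,m+d)$ is not degraded. Everything afterward is the direct kernel computation above, which even produces an exact value rather than merely a lower bound.
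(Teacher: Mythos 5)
Your proof is correct, but it takes a genuinely different route from the paper. The paper argues by induction on $r$ directly: it builds a flag $V_1\subset\cdots\subset V_r=V$ from the partial sums, tracks the gcd $h_i$ of $f_1,\ldots,f_i$, and at each step either picks up a full $m$-dimensional complement (giving $\dim V\geq 2m$) or picks up exactly $\deg(h_i)-\deg(h_{i+1})$ dimensions, which telescopes to $m+d$. You instead reduce to the case of two coprime polynomials by taking a generic scalar combination $\tilde f=\sum_{j\geq 2}c_jf_j$ with $\gcd(f_1,\tilde f)=1$ and $\deg\tilde f\leq d$, then compute the kernel of $(a,b)\mapsto f_1a+\tilde fb$ exactly via the syzygy $(a,b)=(-\tilde fc,f_1c)$ and rank--nullity. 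Both arguments bottom out in the same two-polynomial syzygy computation (the paper isolates it in Remark \ref{rmk:twopoly}), but your reduction yields an exact dimension count for the subspace $V'$ rather than an inductive lower bound, at the cost of invoking the infinitude (indeed algebraic closedness, to identify coprimality with having no common root) of $k$ to avoid finitely many hyperplanes --- harmless here since the paper's standing hypothesis is that $k$ is algebraically closed, whereas the paper's induction works over an arbitrary field. Your handling of the edge cases ($d=0$, and the binding degree constraint $\deg c<m-d$ coming from $\deg f_1=d\geq\deg\tilde f$) is sound.
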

\begin{proof}
Further, we assume, without loss of generality, that none of $f_i$ are zero,
and that $\deg(f_1) = d$.
For $1\leq i \leq r$, let $h_i\in k[x]$ be the greatest common divisor of
$f_1, \ldots, f_i$, i.e., the monic polynomial satisfying $(h_i)=(f_1, \ldots, f_i)$.
Also define vector subspaces
$$
V_i := \set{f_1 g_1 + \ldots + f_{i} g_i \mid g_1, \ldots, g_i \in k[x],\ \deg(g_j) < m\text{ for all $1\leq j \leq i$}}
$$
of $V$. Thus we obtain a flag $V_1 \subset \cdots \subset V_r=V$ of $V$.
Notice that statements of linear dependence in $V_i$ are equivalent to those of existence
of syzygies $f_1 g_1 + \ldots + f_{i} g_i = 0$ with $\deg(g_i) <m$ 
and not all of $g_i\in k[x]$ equal to zero.

We proceed by induction.
Obviously, $\dim(V_1) = m$. In addition, $h_1 = g_1 \in V_1$.
Suppose that $h_i \in V_i$. Then $V_i = \set{h_i g \mid g\in k[x],\ \deg(g)<\dim(V_i)}$.
There exist nonzero polynomials $u$ and $v$ of lowest degree such that $h_i u = f_{i+1} v$.
Since $(h_{i+1}) = (h_i, f_{i+1})$, we have
$(h_i/h_{i+1}) u = (f_{i+1}/h_{i+1}) v$ with $h_i/h_{i+1}$ and $f_{i+1}/h_{i+1}$
being relatively prime polynomials. 
By the minimality condition imposed on the degrees of $u$ and $v$, 
these polynomials are constant multiples of $f_{i+1}/h_{i+1}$ and $h_i/h_{i+1}$, 
respectively.
The space
$$
\set{f_{i+1} g \mid g\in k[x],\ \deg(g)< \deg(v) = \deg(h_i/h_{i+1})}
$$
intersects space $V_i$ trivially, since otherwise there would exist a
nonzero polynomial $g$ of degree less than that of $v$ such that
$f_{i+1} g \in V_i \subset (h_i)$, which would be a contradiction.
If $\deg(v) \geq m$, then subspaces
$$
W_{i+1} := \set{f_{i+1} g \mid g\in k[x],\ \deg(g)<m}
$$
and $V_i$ of $V$ intersect trivially.
Therefore, $V_{i+1} = V_i \oplus W_{i+1}$,
\begin{eqnarray*}
\dim(V) &\geq& \dim(V_{i+1}) \\
&=& \dim(V_i) + \dim(W_{i+1})\\
&\geq& \dim(V_1) + m\\
&=& 2m,
\end{eqnarray*}
and we are done.

On the other hand, if $\deg(v) < m$, then
$$
W_{i+1} := \set{f_{i+1} g \mid g\in k[x],\ \deg(g)<\deg(v)}
$$
intersects $V_i$ trivially, and $V_{i+1} = V_i \oplus W_{i+1}$
with 
\begin{eqnarray*}
\dim(V_{i+1}) &=& \dim(V_i) + \dim(W_{i+1})\\
&=&\dim(V_i) + \deg(v)\\
&=&\dim(V_i) + \deg(h_i) - \deg(h_{i+1})\\
&=&\dim(V_1) + \deg(h_1) - \deg(h_{i+1}),
\end{eqnarray*}
the last equality following by induction on $i$ and telescoping of the degrees.
In addition, $h_{i+1}$, which is a constant multiple of $v h_i$, lies in $V_{i+1}$.
If the latter outcome holds for every $1\leq i \leq r$, then
\begin{eqnarray*}
\dim(V) &=& \dim(V_r)\\
&=& \dim(V_1) + \deg(h_1) - \deg(h_r)\\
&=& m + d,
\end{eqnarray*}
since $h_1 = f_1$ and $h_r = 1$.
\end{proof}

\begin{rmk}\label{rmk:twopoly}
In Lemma \ref{lem:rank}, if $r=1$ or $2$, then
a strict equality $\dim(V) = \min(2m, m+d)$ holds.
Indeed, if $r=1$, then $f_1$ is a nonzero constant, $d := \deg(f_1) =0$,
and $\dim(V) = m = m+d$.

If $r=2$, let $u$ and $v$ be nonzero polynomials of smallest degree
such that $f_1 u = f_2 v$. 
Since $(f_1, f_2)=1$, $u$ and $v$ are constant multiples of $f_2$ and $f_1$,
respectively.
One has $V=V_2$, and by the inductive step of Lemma \ref{lem:rank}
either $m<d$ and $\dim(V_2)=2m$, or $d \leq m$ and
$\dim(V_2)= m + d$.
\end{rmk}

\begin{cor}\label{cor:individualrank}
With all terms as defined in the previous sections,
$$
\min\left(2 \lfloor{d_i}/{p} \rfloor, 
d_{\s(i)}
\right)
\leq \dim(\cC(\cD_i)) \leq 
\min(d_i, d_{\s(i)}).
$$
\end{cor}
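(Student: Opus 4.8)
The plan is to translate the corollary into a statement about the dimension of a span of polynomials and then invoke Lemma \ref{lem:rank}. First I would use Theorem \ref{thm:cartier} together with additivity and $p^{-1}$-linearity of $\cC$: since $k$ is perfect, for any scalars $a_j\in k$ one has $\cC\!\left(\sum_j a_j\,\omega_{i,j}\right)=\sum_j a_j^{1/p}\,\cC(\omega_{i,j})$, and as the $a_j$ range over $k$ so do the $a_j^{1/p}$. Hence $\cC(\cD_i)$ is the $k$-span of the forms $\cC(\omega_{i,j})=x^{\lfloor j/p\rfloor}f_{i,\langle -j/p\rangle p}(x)\,\omega_{\s(i),1}$ for $1\le j\le d_i$. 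Via the basis $\omega_{\s(i),1},\dots,\omega_{\s(i),d_{\s(i)}}$ of $\cD_{\s(i)}$ furnished by \eqref{eqn:basiselts}, the assignment $h\,\omega_{\s(i),1}\mapsto h$ identifies $\cD_{\s(i)}$ with the space of polynomials of degree at most $d_{\s(i)}-1$, so $\dim_k\cC(\cD_i)$ equals the dimension of the $k$-span of $\{\,x^{\lfloor j/p\rfloor}f_{i,\langle -j/p\rangle p}\mid 1\le j\le d_i\,\}$.

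The upper bound is then immediate: this span is generated by $d_i$ polynomials, so its dimension is at most $d_i$, while it lies inside the $d_{\s(i)}$-dimensional space $\cD_{\s(i)}$; together these give $\dim_k\cC(\cD_i)\le\min(d_i,d_{\s(i)})$.

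For the lower bound I would set $m:=\lfloor d_i/p\rfloor$ and apply Lemma \ref{lem:rank} to the polynomials $f_{i,0},\dots,f_{i,p-1}$ from the decomposition \eqref{eqn:decomposition}. By Corollary \ref{cor:unitideal} these generate the unit ideal, and by Remark \ref{rmk:choiceofmd} their maximal degree is $d:=d_{\s(i)}-m$; consequently Lemma \ref{lem:rank} yields $\dim_k V\ge\min(2m,m+d)=\min\!\left(2\lfloor d_i/p\rfloor,\,d_{\s(i)}\right)$ for the space $V=\{\,\sum_{t=0}^{p-1}f_{i,t}g_t\mid\deg g_t<m\,\}$. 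It therefore suffices to show that $V$ is contained in the span computed above.

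The heart of the matter—and the step I expect to be the main obstacle—is exactly this containment, which amounts to checking that every monomial multiple $x^q f_{i,t}$ with $0\le q<m$ and $0\le t\le p-1$ occurs among the generators $x^{\lfloor j/p\rfloor}f_{i,\langle -j/p\rangle p}$. Writing $j=qp+s$ with $0\le s<p$, one finds $\lfloor j/p\rfloor=q$ while $\langle -j/p\rangle p=p-s$ for $s\neq0$ (and $0$ for $s=0$), so that as $j$ runs through $1,\dots,d_i$ the pair $(q,t)$ does sweep out the required rectangle—except that the residue $t=0$ is attained only for $q\ge1$. Reconciling this boundary discrepancy is the delicate point: one is led to absorb the extra factor of $x$ into $f_{i,0}$ and re-examine the unit-ideal hypothesis of Lemma \ref{lem:rank}, which can acquire a spurious common zero at $x=0$. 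I would treat the resulting degenerate configurations separately, exploiting the fact that Lemma \ref{lem:hunder} forbids $h_{\rnk,i}$ from being a $p$-th power possessing a zero of order $\ge p$, which rules out the worst cases and allows the bound $\min\!\left(2\lfloor d_i/p\rfloor,\,d_{\s(i)}\right)$ to be recovered.
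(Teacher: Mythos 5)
Your route is exactly the paper's: identify $\cC(\cD_i)$ with a span of polynomials via Theorem \ref{thm:cartier}, get the upper bound from $p^{-1}$-linearity, and get the lower bound by feeding $f_{i,0},\dots,f_{i,p-1}$ into Lemma \ref{lem:rank} with $m=\lfloor d_i/p\rfloor$ and $d=d_{\s(i)}-\lfloor d_i/p\rfloor$ (the paper's proof is literally this one sentence). The ``boundary discrepancy'' you isolate is genuine if one takes the displayed formula of Theorem \ref{thm:cartier} at face value, but it is an artifact of an off-by-one in that formula rather than a feature of the problem: redoing the computation in its proof (using $\cC(x^p\,dx/x)=x\,dx/x$ versus $\cC(dx/x)=dx/x$), one finds
$$
\cC(\omega_{i,j})=x^{\lceil j/p\rceil-1}\,f_{i,\langle -j/p\rangle p}(x)\,\omega_{\s(i),1}
=x^{\lfloor (j-1)/p\rfloor}\,f_{i,\langle -j/p\rangle p}(x)\,\omega_{\s(i),1},
$$
which agrees with the stated $x^{\lfloor j/p\rfloor}$ exactly when $p\nmid j$. (A sanity check: for $p=3$, $y^2=f$, the classical hyperelliptic formula gives $\cC(x^2\,dx/y)=f_0(x)\,dx/y$, with no extra factor of $x$.) With the corrected exponent, as $j$ runs over $1,\dots,d_i$ the pair $\bigl(\lfloor (j-1)/p\rfloor,\ \langle -j/p\rangle p\bigr)$ sweeps out the entire rectangle $0\le q<m$, $0\le t\le p-1$ --- the pair $(0,0)$ occurs at $j=p\le d_i$ --- so the space $V$ of Lemma \ref{lem:rank} is literally contained in the polynomial image of $\cC(\cD_i)$ and no further argument is needed.

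By contrast, the patch you sketch for the uncorrected formula does not close the gap. Replacing $f_{i,0}$ by $xf_{i,0}$ can indeed destroy the unit-ideal hypothesis, and Lemma \ref{lem:hunder} does not come to the rescue: in the problematic configuration $f_{i,1},\dots,f_{i,p-1}$ all vanish at $0$, hence $f_{i,0}(0)\neq 0$ and $h_{\rnk,i}(0)=f_{i,0}(0)^p\neq 0$, so $0$ is not a zero of $h_{\rnk,i}$ at all and the ``orders less than $p$'' condition says nothing (for instance $h_{\rnk,i}=1+x^{4}$ with $p=3$ has only simple roots yet yields $f_{i,0}=1$, $f_{i,1}=x$, $f_{i,2}=0$, for which $(xf_{i,0},f_{i,1},f_{i,2})=(x)$). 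So as written your lower-bound argument is incomplete at its final step; the correct repair is to fix the exponent in Theorem \ref{thm:cartier}, after which your (and the paper's) application of Lemma \ref{lem:rank} goes through verbatim. The upper bound and the reduction to polynomial spans are fine.
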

\begin{proof}
Combine Lemma \ref{lem:hunder}, Corollary \ref{cor:unitideal}, Theorem
\ref{thm:cartier}, and Lemma \ref{lem:rank}.
Apply the latter to polynomials $f_{i,0}, \ldots, f_{i,p-1}$,
using $m = \lfloor d_i / p\rfloor$ and
$d = d_{\s(i)} - \lfloor d_i / p\rfloor,$
chosen so due to Remark \ref{rmk:choiceofmd}.

The second inequality follows from the fact that the dimension of the image of a
$p^{-1}$-linear map does not exceed the dimensions of the domain and codomain.
\end{proof}

\begin{proof}[Proof of Theorem \ref{thm:main}]
From direct sum decomposition \eqref{eqn:directsum} it follows that
\begin{equation*}
\rank(\cC) = \sum_{i=1}^{\n-1} \dim(\cC(\cD_i)).
\end{equation*}
Now apply Corollary \ref{cor:individualrank} and \cite[(5.2.8)]{LO}.
\end{proof}

\begin{proof}[Proof of Corollary \ref{cor:superspecial}]
One has
\begin{eqnarray*}
d_i
&=& \sum_{\alpha\in B} \langle {i n_\alpha}/\n \rangle - 1\\
&=& \sum_{\alpha\in B} \langle {(p\s(i) - \n\e(i)) n_\alpha}/{\n} \rangle - 1\\
&=& \sum_{\alpha\in B} \langle {p\s(i) n_\alpha}/{\n} \rangle - 1\\
&\leq& \sum_{\alpha\in B} p \langle {\s(i) n_\alpha}/{\n} \rangle - 1\\
&=& p d_{\s(i)} + p - 1,
\end{eqnarray*}
and therefore $\lfloor d_i/p \rfloor \leq d_{\s(i)}.$
Thus, if $d_i \geq p$, then $\min(2 \lfloor d_i/p \rfloor, d_{\s(i)}) \geq 1$
and $g_C - a_C \geq 1$.
\end{proof}

\begin{proof}[Proof of Corollary \ref{cor:chartwo}]
Recall that $p=2$.
If $d_i$ is even, then $2\lfloor d_i/p \rfloor = d_i$ and, by
Corollary \ref{cor:individualrank}, $\dim(\cC(\cD_i)) = \min(d_i, d_{\s(i)})$.

Suppose that $d_i$ is odd. Then $\langle (-d_i-1)/p\rangle p = 0$,
and therefore $\deg(f_{i,0}) \geq \deg(f_{i,1})$.
Apply Remark \ref{rmk:twopoly} to 
$f_{i,0}$ and $f_{i, 1}$ with $d = \deg(f_{i,0})$ and $m = (d_i+1)/2$.
The dimension of the vector space
$$
V = \langle f_{i,0} g_0 + f_{i,1} g_1 \mid g_i \in k[x], \deg(g_i) < m
\textrm{ for $i=0,1$} \rangle
$$
equals $\min(2m, m+d)$.
But $2m = d_i + 1$ and $m + d = (\lfloor d_i/p \rfloor+1) + (d_{\s(i)} - \lfloor d_i/p \rfloor) = d_{\s(i)} + 1$. Therefore,
\begin{eqnarray*}
\dim(\cC(\cD_i)) & \geq & \dim(V) - 1 \\
&=& \min(d_i, d_{\s(i)}).
\end{eqnarray*}
But by Corollary \ref{cor:individualrank}, $\dim(\cC(\cD_i)) \leq \min(d_i, d_{\s(i)})$.
We again obtain equality $\dim(\cC(\cD_i)) = \min(d_i, d_{\s(i)})$,
and the desired formula follows by direct sum decomposition \eqref{eqn:directsum}.
\end{proof}


\end{document}